\documentclass{amsart}

\usepackage{amsmath, amsthm, amssymb, latexsym}

\newcommand{\diverges}{\mathord{\uparrow}}

\DeclareMathOperator{\dom}{dom}

\DeclareMathOperator{\gra}{graph}

\newcommand{\uhr}{\upharpoonright}

\renewcommand{\geq}{\geqslant}
\renewcommand{\leq}{\leqslant}

\newtheorem{thm}{Theorem}%[section]

\theoremstyle{definition}
\newtheorem{defn}[thm]{Definition}
\newtheorem{q}[thm]{Question}
\newtheorem{oq}[thm]{Open Question}

\title{A Minimal Pair in the Generic Degrees}

\author{Denis R. Hirschfeldt}

\address{Department of Mathematics, The University of Chicago}
\email{drh@math.uchicago.edu}

\thanks{Partially supported by grants DMS-1600543 and DMS-1854279 from
the National Science Foundation. I thank Carl Jockusch for reading and
commenting on a draft of the proof of the main result, and
later a draft of the paper itself.}

\begin{document}

\begin{abstract}
We show that there is a minimal pair in the nonuniform generic
degrees, and hence also in the uniform generic degrees. This fact
contrasts with Igusa's result that there are no minimal pairs for
relative generic computability, and answers a basic structural
question mentioned in several papers in the area.
\end{abstract}

\maketitle

Generic computability is a notion of ``almost everywhere
computability'' introduced by Kapovich, Myasnikov, Schupp, and
Shpilrain~\cite{KMSS}. Beginning with the work of Jockusch and
Schupp~\cite{JocSch}, it and the related notion of coarse
computability have been studied from the computability-theoretic
viewpoint by several authors. (See Jockusch and Schupp~\cite{JocSch2}
for a survey. Coarse computability had actually been consider earlier
by Terwijn~\cite{Terwijn}.) Here, ``almost everywhere'' is defined in
terms of (asymptotic) density. A set $A$ has \emph{density $1$} if
$\lim_n \frac{|A \uhr n|}{n} = 1$, and has \emph{density $0$} if
its complement has density $1$.

\begin{defn}
A \emph{generic description} of a set $A$ is a partial function $f$
such that $\dom f$ has density $1$ and $f(n)=A(n)$ whenever $f(n)$ is
defined. A set is \emph{generically computable} if it has a partial
computable generic description.

A \emph{coarse description} of a set $A$ is a set $C$ such that $\{n :
C(n)=A(n)\}$ has density $1$. A set is \emph{coarsely computable} if
it has a computable coarse description.
\end{defn}

Thus generic computability captures the idea of computing a set while
allowing for a small number of errors of omission, while coarse
computability captures the idea of computing a set while allowing for
a small number of errors of commission. We can also consider notions
that allow both kinds of errors, as was done by Astor, Hirschfeldt,
and Jockusch~\cite{AHJ}.

We can of course relativize the above notions to an oracle. We can
also use them to define notions of reducibility. For coarse
reducibility, doing so is straightforward, though there are two
natural versions. (The fact that these versions are different, as are
the analogous ones for generic computability defined below, was shown
by~Dzhafarov and Igusa~\cite{DI}.)

\begin{defn}
We say that $A$ is \emph{nonuniformly coarsely reducible} to $B$ if
every coarse description of $A$ computes a coarse description of $B$.
We say that $A$ is \emph{uniformly coarsely reducible} to $B$ if there
is a Turing functional $\Phi$ such that if $C$ is a coarse description
of $B$, then $\Phi^C$ is a coarse description of $A$.
\end{defn}

Generic descriptions are partial functions, so we cannot use them
directly as oracles, but we can use their graphs, together with the
notion of enumeration reducibility, which does not allow us to use
negative information about an oracle. Recall that an
\emph{enumeration operator} is a c.e.\ set $W$ of pairs $(F,k)$
with each $F$ finite. For an oracle $X$, let $W^X = \{k : \exists
(F,k) \in W\, [F \subseteq X]\}$. Then $Y$ is \emph {enumeration
reducible to} $X$ if there is an enumeration operator $W$ such that
$Y = W^X$. We identify a partial function with its graph, so
for partial functions $f$ and $g$, we say that $f$ is
enumeration reducible to $g$ if $\gra(f)$ is enumeration
reducible to $\gra(g)$, and write $W^g$ for $W^{\gra(g)}$. We write
$W^X[s]$ for $\{k : \exists (F,k) \in W[s]\, [F \subseteq X]\}$. The
\emph{use} of an enumeration $k \in W^X[s]$ is the least $n$ such that
$\exists (F,k) \in W[s]\, [F \subseteq X \uhr n]$.

\begin{defn}
We say that $A$ is \emph{nonuniformly generically reducible} to $B$ if
for every generic description $f$ of $B$, there is a generic
description of $A$ that is is enumeration reducible to $f$.
We say that $A$ is \emph{uniformly generically reducible} to $B$ if
there is an enumeration operator $W$ such that if $f$ is a generic
description of $B$, then $W^f$ is a generic description of $A$.
\end{defn}

These reducibilities induce equivalence relations on $2^\omega$, from
which degree structures arise as usual. For any degree structure with
a least element $\bf 0$, a \emph{minimal pair} is a pair of degrees
${\bf a},{\bf b} > {\bf 0}$ such that if ${\bf c}<{\bf a}$ and ${\bf
c} < {\bf b}$ then ${\bf c} = {\bf 0}$. Most degree structures
studied in computability theory have minimal pairs, and proving this
fact is often one of the first structural results one establishes
about such a structure. For the (nonuniform and uniform) coarse
degrees, the existence of minimal pairs was proved by Hirschfeldt,
Jockusch, Kuyper, and Schupp~\cite{HJKS}. For the generic degrees,
however, the following basic question had remained open.

\begin{q}[Jockusch and Schupp~\cite{JocSch}; Igusa~\cite{Igusa}; see
also~\cite{Hirschfeldt}]
\label{question}
Are there minimal pairs in the (nonuniform or uniform) generic
degrees?
\end{q}

An interesting aspect of this question is its relationship to relative
generic computability. In the case of coarse computability, the
aforementioned proof of the existence of minimal pairs follows from
the following stronger result. (See e.g.~\cite{DowHir} for more on
notions of algorithmic randomness.)

\begin{thm}[Hirschfeldt, Jockusch, Kuyper, and Schupp~\cite{HJKS}]
\label{randthm}
If $X$ is not coarsely computable and $Y$ is weakly $3$-random
relative to $X$ then $X$ and $Y$ form a minimal pair for relative
coarse computability. That is, if a set is coarsely computable
relative both to $X$ and to $Y$, then it is coarsely computable.
\end{thm}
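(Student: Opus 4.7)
Suppose $A$ is coarsely computable both from $X$ and from $Y$, witnessed by Turing functionals $\Phi$ and $\Psi$ so that $C := \Phi^X$ and $\Psi^Y$ are both coarse descriptions of $A$. In particular $C \triangle \Psi^Y$ has density $0$, so $\Psi^Y$ is itself a coarse description of the $X$-computable set $C$. My plan is to run a randomness argument on the class
\[
\mathcal{D} = \{Z : \Psi^Z \text{ is a coarse description of } C\}.
\]
After a standard technical massaging so that $\Psi$ produces total $\{0,1\}$-valued functions (making agreement with the $X$-computable $C$ decidable in $Z\oplus X$), the defining statement that $|\{k<n : \Psi^Z(k) \neq C(k)\}|/n \to 0$ becomes a $\Pi_3^X$ condition on $Z$. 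So $\mathcal{D}$ is a $\Pi_3^X$ class containing $Y$. Since $Y$ is weakly $3$-random relative to $X$, no $\Pi_3^X$ null class contains $Y$, and hence $\mu(\mathcal{D}) > 0$.

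The heart of the proof is to convert this positive measure into a \emph{computable} coarse description of $C$ (and hence of $A$). I plan to use a majority vote across random oracles: define $M(n)=1$ iff $\mu(\{Z:\Psi^Z(n)=1\})\geq 1/2$. A Fubini/dominated-convergence argument, applied to $\mathbf{1}[\Psi^Z(n)\neq C(n)]$ (which vanishes in Ces\`aro mean on the positive-measure set $\mathcal{D}$), bounds the upper density of $M\triangle C$ by $2(1-\mu(\mathcal{D}))$. This is not yet density $0$, so I will appeal to the Lebesgue density theorem: for each $\epsilon>0$ there is a string $\tau$ with $\mu(\mathcal{D}\cap[\tau])\geq(1-\epsilon)\mu([\tau])$, and the corresponding conditional majority vote $M^\tau$ then has $M^\tau\triangle C$ of upper density at most $2\epsilon$. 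Stitching together $M^{\tau_k}$ for a sequence with $\epsilon_k\to 0$ along suitably sparse intervals yields a single set $E$ with $E\triangle C$ of density $0$.

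The main obstacle I expect is making this $E$ \emph{computable}: two sources of non-effectiveness have to be absorbed, namely that the Lebesgue-density witnesses $\tau_k$ are not given computably, and that the measure of the $\Sigma_1^0$ class $\{Z:\Psi^Z(n)=1\}$ is only lower semi-computable. I would try to handle both by using the slack that $E$ only needs to agree with $C$ on a density-$1$ set: on each stitched interval one has room to replace $\tau_k$ and the majority vote by lower approximations, committing to values of $E$ from c.e.\ bookkeeping while keeping the total density error below the next threshold. The hypothesis that $X$ is not coarsely computable does not obviously enter the implication itself and is presumably present in the statement to ensure $X$ contributes a nonzero degree to the minimal pair.
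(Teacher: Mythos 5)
This theorem is quoted in the paper from Hirschfeldt--Jockusch--Kuyper--Schupp \cite{HJKS}; the paper itself gives no proof, so I am judging your argument on its own terms. Your opening moves are right and match the standard approach: reducing to showing that the $X$-computable set $C=\Phi^X$ is coarsely computable, massaging $\Psi$ to be total so that the class $\mathcal D=\{Z:\Psi^Z\text{ is a coarse description of }C\}$ is $\Pi^0_3(X)$, invoking weak $3$-randomness of $Y$ relative to $X$ to get $\mu(\mathcal D)>0$, and then using Fubini, majority voting, and Lebesgue density to produce, for each $\epsilon>0$, a computable set whose symmetric difference with $C$ has upper density at most $2\epsilon$. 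Your side remark about the role of the hypothesis that $X$ is not coarsely computable is also correct.

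The genuine gap is the final ``stitching'' step, and it is not the kind of gap that bookkeeping can close. What you have actually established at that point is only that $\gamma(C)=1$ in the sense of \cite{HJKS}: for every $\epsilon>0$ there is \emph{some} computable set agreeing with $C$ on a set of lower density at least $1-\epsilon$. It is a theorem of the same paper that this is strictly weaker than coarse computability --- there are sets with $\gamma=1$ that are not coarsely computable --- so no generic argument that merely ``keeps the total density error below the next threshold'' can finish the proof; if it could, it would erase that separation. Concretely, your set $E$ needs infinitely many advice strings $\tau_k$, one per precision level, and a computable set can absorb only finitely much non-uniform information. Your proposed fix is to find the $\tau_k$ by ``c.e.\ bookkeeping'' using lower approximations, but the quantity you would need to approximate, $\mu(\mathcal D\cap[\tau])$, is the measure of a $\Pi^0_3(X)$ class (not of a $\Sigma^0_1$ class --- only the individual events $\{Z:\Psi^Z(n)=i\}$ are $\Sigma^0_1$), and it is neither left-c.e.\ nor right-c.e.\ in any useful sense; there is no effective procedure, even in the limit, that recognizes which $\tau$ have high conditional density of $\mathcal D$. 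So the step from ``error upper density at most $2\epsilon$ for each $\epsilon$, via different computable sets with different advice'' to ``a single computable coarse description'' is exactly where the real work of the theorem lies, and it is missing. (The lower-approximation issue for the supermajority vote itself, by contrast, is handled correctly in spirit: waiting for one value's enumerated measure to cross a fixed threshold does work once a good $\tau$ is in hand.) To repair the proof you need an additional idea that exploits the specific structure of the situation --- for instance, a further use of the randomness of $Y$ or of the fact that $C$ is $X$-computable --- rather than the slack in the definition of density.
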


In the generic case, however, the situation is quite different.

\begin{thm}[Igusa~\cite{Igusa}]
\label{igthm}
There are no minimal pairs for relative generic computability. That
is, if $X$ and $Y$ are not computable, then there is a set that is
not generically computable, but is generically computable relative
both to $X$ and to $Y$.
\end{thm}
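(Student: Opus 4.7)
The plan is to construct $A$ by stages, simultaneously defining two partial functions $f$ and $g$ that are intended to be $X$-partial computable and $Y$-partial computable generic descriptions of $A$, respectively. The requirements are: for each $e$, a requirement $R_e$ that $\varphi_e$ is not a generic description of $A$; a global requirement $P_X$ that $\dom(f)$ has density $1$ and $f(n)=A(n)$ whenever $f(n)\converges$; and the symmetric $P_Y$.

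For the $R_e$'s, I would use an adaptive diagonalization. At stage $s$, for the least unattended $R_e$, I would search for a fresh position $n$ with $\varphi_{e,s}(n)\converges$, set $A(n)=1-\varphi_e(n)$, and mark $n$ as ``free,'' meaning that $f(n)$ and $g(n)$ are to remain undefined. Since at most one free position is generated per requirement, these form a density-$0$ set that does not obstruct the density-$1$ domains of $f$ and $g$. If no such $n$ is ever found for $R_e$, then $\dom(\varphi_e)$ fails to have density $1$ and $R_e$ is vacuous.

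On non-free positions, I would define $A$, $f$, and $g$ using the oracles $X$ and $Y$. I would partition these positions (using $X$- and $Y$-bits) into density-$0$ classes of ``$X$-coding'' positions (where $A(n)$ records an $X$-bit, $f(n)$ is defined but $g(n)$ is not) and symmetric ``$Y$-coding'' positions, together with a density-$1$ ``filler'' on which both $f$ and $g$ must be defined and agree with $A$. On the filler, $A(n)$ is then simultaneously $X$-partial computable (via $f$) and $Y$-partial computable (via $g$), which is possible without making $A$ Turing below the Turing-meet of $X$ and $Y$ because the filler set need not itself be c.e.

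The main obstacle is the filler: if $A$ restricted to any c.e.\ density-$1$ subset of $\omega$ had c.e.\ graph, then $A$ would be generically computable. To prevent this, I would define the filler in terms of $X$- and $Y$-bits so that the filler is not c.e.\ as a set, and I would encode $A$ on the filler in a way that uses both $X$ and $Y$ essentially: each oracle alone recovers $A$ on a density-$1$ subset of the filler, but no partial computable function can capture $A$ on a density-$1$ c.e.\ domain without implicitly computing non-computable information in $X$ or $Y$. Combining the adaptive diagonalization on free positions with a filler coding that functions uniformly in any non-computable $X$ and $Y$ is the delicate part of the argument; this is where the non-computability hypotheses on $X$ and $Y$ must be used in an essential way to rule out every potential generic description of $A$.
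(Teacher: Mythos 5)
First, a point of orientation: the paper does not prove this theorem; it is quoted from Igusa~\cite{Igusa}, where the proof occupies a substantial part of a research paper. So your proposal has to be judged on its own merits, and as it stands it is an outline of what a proof would need rather than a proof. The $R_e$ part is fine in isolation: one point of disagreement defeats $\varphi_e$ as a generic description, and the witnesses form a density-$0$ set on which $f$ and $g$ may simply be undefined. The genuine gap is the ``filler,'' and you have in effect acknowledged it yourself by calling it ``the delicate part.'' On a density-$1$ set you must arrange that $A$ is computed by an $X$-partial computable $f$ and by a $Y$-partial computable $g$, each with density-$1$ domain, and yet that no partial computable function agrees with $A$ on a density-$1$ domain. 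This must work for \emph{arbitrary} noncomputable $X$ and $Y$ --- including pairs that are minimal in the Turing degrees, hyperimmune-free, etc. --- so no coding scheme of the form ``$A$ records bits of $X$ here and bits of $Y$ there'' can be written down without first explaining what common, noncomputable-but-weak information both oracles are guaranteed to supply. That question is the entire content of the theorem, and the proposal contains no mechanism for it. Note also that the filler and the $R_e$'s are not independent: if the filler coding degenerates and $A$ becomes computable, then the $R_e$ strategy is contradictory for the index computing $A$ (you would be setting $A(n)=1-A(n)$), so the diagonalization cannot be certified ``modulo'' the filler.

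A secondary but real problem is your suggestion to define the partition into $X$-coding, $Y$-coding, and filler positions ``using $X$- and $Y$-bits.'' The description $g$ must be computable from $Y$ alone, with density-$1$ domain and no errors of commission; if the locations where $g$ must stay silent are only identifiable from $X$ (or from $X\oplus Y$), then $Y$ cannot safely place $g$'s domain, and an accidental wrong value on an $X$-coding position destroys $g$ as a generic description. Igusa's actual argument avoids all of this by exploiting the specific weakness of generic computation: he codes information into $A$ so that a generic description need only supply, densely often, correct information of a very coarse kind (essentially, sufficiently good bounds), and then shows that this coarse information can be extracted uniformly from \emph{any} noncomputable oracle, while no computable process can supply it. Making that precise is where the noncomputability of $X$ and $Y$ is actually used, and it is the step your proposal would need to supply. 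I recommend consulting~\cite{Igusa} directly.
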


Notice that this result does not imply that there are no minimal pairs
for generic reducibility, because being generically computable
relative both to $X$ and to $Y$ is a weaker condition than being
generically reducible both to $X$ and to $Y$. For a set $A$ to have
the latter property, not only do both $X$ and $Y$ have to enumerate
generic descriptions of $A$, but so does every generic description of
$X$ or of $Y$. Thus we have some extra power in trying to build a
minimal pair for generic reducibility, which we will exploit below,
relying particularly on the fact that enumeration reducibility cannot
make use of information about inputs on which a partial computable
function is undefined, and hence is monotonic, in the sense that for
an enumeration operator $W$, if the partial function $g$ extends the
partial function $f$, then any number enumerated into $W^f$ is also
enumerated into $W^g$.

On the other hand, Theorem~\ref{igthm} does show that the methods
of~\cite{HJKS}, and the related ones of~\cite{AHJ}, are not available
here. Nevertheless, in this paper we give a positive answer to
Question~\ref{question} in both cases. (Notice that it is enough to
consider the nonuniform case, since if the nonuniform generic degrees
of two sets form a minimal pair, then so do their uniform generic
degrees.) Despite solving a reasonably well-known open problem, the
proof is fairly short. It is inspired by the construction of a minimal
pair of c.e.\ Turing degrees, but is a finite injury construction that
relies on the monotonicity of enumeration operators.

We now give the proof, followed by a few comments on coarse
computability, further work of Igusa~\cite{Igusa2}, and the notions of
dense and effective dense computability studied in~\cite{AHJ}.

\begin{thm}
\label{main}
There is a minimal pair in the nonuniform generic degrees (and hence
in the uniform generic degrees).
\end{thm}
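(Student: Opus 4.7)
The plan is a finite injury priority construction of $A$ and $B$ as c.e.\ sets, modeled on the Lachlan--Yates construction of a minimal pair of c.e.\ Turing degrees, with the monotonicity of enumeration operators playing the role of use preservation. We take the particular generic descriptions witnessing the nonuniform minimal pair property to be $\chi_A$ and $\chi_B$; this suffices because $C \leq_{ng} A$ and $C \leq_{ng} B$ imply, by definition applied to these specific generic descriptions, the existence of enumeration operators $W_e, W_i$ with $W_e^{\chi_A}$ and $W_i^{\chi_B}$ generic descriptions of $C$. The requirements, in the usual priority order, are: $N_e^A$, ensuring that $\varphi_e$ is not a generic description of $A$; $N_e^B$, the analogue; and $R_{e,i}$, asserting that if $W_e^{\chi_A}$ and $W_i^{\chi_B}$ are both generic descriptions of a common set $C$, then $C$ is generically computable.

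The $N$-requirements are handled by the usual witness-based diagonalization: assign a fresh witness $n$, wait until $\varphi_e(n) \converges$, and commit $A(n) \neq \varphi_e(n)$ by appropriately enumerating or refraining from enumerating $n$ into $A$.

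The heart of the construction is the strategy for $R_{e,i}$. We build a partial computable function $\psi_{e,i}$ intended as a generic description of any $C$ satisfying the hypothesis. We enumerate $(n,v)$ into the graph of $\psi_{e,i}$ whenever there is a stage $s$ together with finite sets $F_A \subseteq A_s \times \{1\}$ and $F_B \subseteq B_s \times \{1\}$ such that $(F_A,(n,v)) \in W_e[s]$ and $(F_B,(n,v)) \in W_i[s]$. Since $A_s \subseteq A$ and $B_s \subseteq B$, we have $F_A \subseteq \gra(\chi_A)$ and $F_B \subseteq \gra(\chi_B)$, so the monotonicity of enumeration operators places $(n,v)$ in $W_e^{\chi_A} \cap W_i^{\chi_B}$. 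Under the hypothesis of $R_{e,i}$ this forces $v = C(n)$, so $\psi_{e,i}$ never commits an incorrect value.

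The main obstacle is to show that $\dom \psi_{e,i}$ has density $1$ whenever $W_e^{\chi_A}$ and $W_i^{\chi_B}$ are both generic descriptions of $C$: a priori, either enumeration may rely essentially on negative information (pairs $(m,0)$ with $m \notin A$ or $m \notin B$), which cannot be confirmed c.e.\ from the approximations $A_s, B_s$. This is addressed by a Lachlan-style alternation within each $R_{e,i}$-strategy, which acts alternately on the $A$-side and the $B$-side, enumerating carefully chosen elements that in effect convert negative-information enumerations into positive-information ones while setting restraints on the other side. The usual finite injury analysis, together with the fact that once a pair is enumerated by a positive-information witness it is permanent by monotonicity, then shows that each strategy stabilizes and that all requirements are eventually satisfied.
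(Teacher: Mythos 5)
Your reduction to the two specific descriptions $\chi_A$ and $\chi_B$ is legitimate, and the positive-information half of your $R_{e,i}$-strategy is sound: if $F_A \subseteq A_s \times \{1\}$ then indeed $F_A \subseteq \gra(\chi_A)$ forever, by monotonicity. But the step you defer --- ``converting negative-information enumerations into positive-information ones'' --- is where the proof actually lives, and as stated it cannot work. The operators $W_e, W_i$ are supplied by the opponent; the construction controls only $A$ and $B$. If $W_e$ enumerates $(n,C(n))$ solely via axioms of the form $(F,(n,C(n)))$ with $F \subseteq \overline{A} \times \{0\}$ (which is entirely possible: $W_e^{\chi_A}$ can be a generic description of $C$ while using only negative information about $A$), there is nothing you can enumerate into $A$ or $B$ that creates a positive-information axiom for $(n,C(n))$ --- enumerating elements of $F$ into $A$ only destroys the axiom, and you cannot add axioms to $W_e$. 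Since $\overline{A}$ is not c.e.\ when $A$ is a noncomputable c.e.\ set, your partial computable $\psi_{e,i}$ has no c.e.\ way to confirm such axioms, and the Lachlan device of trusting the current approximation and arguing via preservation runs into the problem that, with density-$1$ rather than total domains, there is no computable length-of-agreement function to define expansionary stages. Moreover, with $A$ and $B$ c.e.\ you have no mechanism for restoring a destroyed computation: once an element enters $A$, any axiom using $(m,0)$ for that element is dead permanently.

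The paper's proof avoids all of this by making a different choice of distinguished generic descriptions and of the minimal-pair sets themselves. It builds \emph{d.c.e.} sets $A_0, A_1$ of density $0$ and takes the minimal pair to be density-$1$ sets $X_j$ that equal $1$ off $A_j$; the distinguished description of $X_j$ is the partial function $f_j$ with $f_j(n)=1$ for $n \notin A_j$ and $f_j(n)\diverges$ for $n \in A_j$. This has two payoffs your setup lacks. First, $\gra(f_j) = \overline{A_j} \times \{1\}$ carries only one kind of information, so there is no negative-information case at all, and diagonalization (making $X_j(n) \neq \Phi_e(n)$ for $n \in A_j$) happens exactly where $f_j$ is silent, so it never disturbs the oracle's correctness. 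Second, and crucially, the d.c.e.\ structure provides the undo mechanism: putting $n$ into $A_j$ shrinks $f_j$ (destroying computations on side $j$), and later \emph{removing} lower-priority elements from $A_{1-j}$ extends $f_{1-j}$ past an earlier approximation, so that by monotonicity every enumeration seen on side $1-j$ at the earlier stage reappears and can then be protected by restraint. That extension property, not use preservation over c.e.\ approximations, is what replaces the classical minimal-pair preservation argument, and it is the piece your proposal is missing.
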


\begin{proof}
Let $R_e = \{n : 2^e \mid n \, \wedge \, 2^{e+1} \nmid n\}$. Let
$W_0,W_1,\ldots$ be an effective listing of the enumeration operators.

We will build $\Delta^0_2$ sets $A_0$ and $A_1$, and the following
functions defined from these sets: Let $f_{j,s}$ be the partial
function defined by letting $f_{j,s}(n)=1$ if $n \notin A_j[s]$, and
$f_{j,s}(n)\diverges$ if $n \in A_j[s]$. Let $f_j$ be the partial
function defined by letting $f_j(n)=1$ if $n \notin A_j$, and
$f_j(n)\diverges$ if $n \in A_j$.

We will build $A_0$ and $A_1$ to have the following properties.
\begin{enumerate}

\item Each $A_j \cap R_e$ is finite, so $A_j$ is almost entirely
contained in each $\bigcup_{e>k} R_e=\{n>0 : 2^{e+1} \mid n\}$, and
hence has density $0$.

\item If $\dom \Phi_e \cap R_e$ is infinite then $\dom \Phi_e \cap R_e
\cap A_j \neq \emptyset$.

\item For each $e_0$, $e_1$, and $s$, if $x \in W_{e_0}^{f_{0,s}}[s] \cap
W_{e_1}^{f_{1,s}}[s]$, then $x \in W_{e_j}^{f_j}$ for some $j \leq 1$.

\end{enumerate}
Assuming we have done so, if $n \in \dom \Phi_e \cap R_e \cap A_j$
then let $X_j(n) \neq \Phi_e(n)$, and let $X_j(n)=1$ for
all other $n$. We claim that the generic degrees of $X_0$ and $X_1$
form a minimal pair

If $\dom \Phi_e$ has density $1$ then $\dom \Phi_e \cap R_e$ is
infinite, so by property (2), there is an $n \in \dom \Phi_e \cap R_e
\cap A_j$, and hence $\Phi_e$ is not a generic description of
$X_j$. Thus neither $X_j$ is generically computable.

By property (1) and the definition of $X_j$, each $f_j$ is a generic
description of $X_j$, so if $Y$ is generically reducible to both
$X_0$ and $X_1$, then there are $e_0$ and $e_1$ such that each
$W_{e_j}^{f_j}$ is a generic description of $Y$. Let $\Psi$ be defined
as follows. For each $n$,
search for an $s$ and a $k \leq 1$ such that $\langle n,k \rangle
\in W_{e_0}^{f_{0,s}}[s] \cap W_{e_1}^{f_{1,s}}[s]$. If one is found,
then let $\Psi(n)=k$. By property (3), if $\Psi(n)=k$ then $\langle n,k
\rangle \in W_{e_j}^{f_j}$ for some $j \leq 1$, which implies that
$Y(n)=k$. There are density $1$ many $n$ such that $\langle n,Y(n)
\rangle \in W_{e_0}^{f_0} \cap W_{e_1}^{f_1}$. For any such $n$, there
must be an $s$ such that $\langle n,Y(n) \rangle \in
W_{e_0}^{f_{0,s}}[s] \cap W_{e_1}^{f_{1,s}}[s]$, whence
$\Psi(n)=Y(n)$. Thus $\Psi$ is a generic description of $Y$. But
$\Psi$ is partial computable, so $Y$ is generically computable.

Thus the nonuniform generic degrees of $X_0$ and $X_1$ form a minimal
pair, and hence so do their uniform generic degrees.

\medskip

To explain the basic idea for building $A_0$ and $A_1$, consider
requirements
\[
\mathcal P_{e,j} : |\dom \Phi_e \cap R_e|=\omega\; \Rightarrow\; \dom
\Phi_e \cap R_e \cap A_j \neq \emptyset
\]
and
\[
\mathcal N_{e_0,e_1} : \forall s\, \forall x\,
[x \in W_{e_0}^{f_{0,s}}[s] \cap
W_{e_1}^{f_{1,s}}[s]\;  \Rightarrow\;  \exists j\, [x
\in W_{e_j}^{f_j}]],
\]
arranged into a priority list as usual. (In fact, only the $\mathcal
P$-requirements need to be assigned priorities.)

Let us consider the interaction of a requirement $\mathcal
N_{e_0,e_1}$ with the requirements $\mathcal P_{e,j}$. (Different
$\mathcal N$-requirements will not interact with each other.) Whenever
we have $x \in W_{e_0}^{f_{0,s}}[s] \cap W_{e_1}^{f_{1,s}}[s]$, we
want to preserve at least one of the computations that have led to the
enumerations of $x$. Each $\mathcal P_{e,j}$ acts by waiting for a
witness $n \in \dom \Phi_e \cap R_e$ to appear, and then putting $n$
into $A_j$. Doing so might destroy computations we are trying to
preserve, but only on the $j$-side. We can now force all requirements
weaker than $\mathcal P_{e,j}$ to choose witnesses beyond the uses of
all computations we are trying to preserve, thus keeping the
$(1-j)$-side from harm by such requirements. (Notice that it is not
$\mathcal N_{e_0,e_1}$ that imposes this restraint, but $\mathcal
P_{e,j}$, which is why the restraint imposed on a particular $\mathcal
P$-requirement will be bounded, even as $\mathcal N_{e_0,e_1}$ has more
and more computations it wants to preserve.)

But what if a requirement $\mathcal P_{e',1-j}$ stronger than
$\mathcal P_{e,j}$ wants to act, say at stage $t>s$? Then we must let
it do so, which might destroy computations on the $(1-j)$-side. To
compensate for that possibility, we remove the numbers we have put
into $A_j$ for the sake of $\mathcal P_{e,j}$ (or any other
requirement weaker than $\mathcal P_{e',1-j}$). The key observation
here is the following: this action does not necessarily restore
$f_{j,t}$ to be the same as $f_{j,s}$, but assuming that $\mathcal
P_{e',1-j}$ is the strongest requirement to act since stage $s$, it
ensures that $f_{j,t} \supseteq f_{j,s}$, which means that every
element of $W_{e_j}^{f_{j,s}}[s]$ is also in $W_{e_j}^{f_{j,t}}[t]$.

\medskip

We now turn to the formal construction of $A_0$ and $A_1$. For each
$e,j$, we have a restraint $r(e,j)$, initially set to $0$. We adopt
the common convention that a use defined at stage $s$ cannot be larger
than $s$.

At stage $s$, let $\langle e,j \rangle<s$ be least such that $\dom
\Phi_e[s] \cap R_e \cap A_j[s] = \emptyset$ and there is an $n \in
\dom \Phi_e[s] \cap R_e$ that is larger than $r(e',j')$ for all
$\langle e',j' \rangle < \langle e,j \rangle$. (If there is no such
pair $e,j$ then proceed to the next stage.) We say that $\mathcal
P_{e,j}$ acts at stage $s$. Put $n$ into $A_j$. For each $\langle
e',1-j \rangle > \langle e,j \rangle$, remove every number put into
$A_{1-j}$ at a previous stage at which $\mathcal P_{e',1-j}$
acted. Let $r(e,j)=s$.

\medskip

We now verify that this construction has the desired properties.

If a requirement puts a number into $A_j$ and that number is later
removed at stage $s$, then any number put into $A_j$ by that same
requirement at a later stage will be bigger than $s$. Since only
$\mathcal P_{e,j}$ can ever put a number into $R_e \cap A_j$, and the
$R_e$'s are disjoint, each $A_j$ is $\Delta^0_2$ (in fact, d.c.e.).

Assume by induction that each $\mathcal P_{e',j'}$ with $\langle e',j'
\rangle < \langle e,j \rangle$ stops acting, say by stage $t$. Then
all the corresponding restraints $r(e',j')$ stop changing, so if $\dom
\Phi_e \cap R_e$ is infinite then either there is some $n \in \dom
\Phi_e[t] \cap R_e \cap A_j[t]$ or $\mathcal P_{e,j}$ eventually gets
to act after stage $t$ and put a number $n$ into $\dom \Phi_e \cap R_e
\cap A_j$. In either case, $n$ is never later removed from $A_j$, so
property (2) holds. Furthermore, $\mathcal P_{e,j}$ acts at most once
after stage $t$, so the induction can continue, and $\mathcal
P_{e,j}$ puts at most finitely elements into $A_j \cap R_e$ after
stage $t$ (in fact, at most one), so property (1) also holds.

Now fix $e_0,e_1$. Suppose that $x \in W_{e_0}^{f_{0,s}}[s] \cap
W_{e_1}^{f_{1,s}}[s]$. Let $\langle e,j \rangle$ be the least pair
such that $\mathcal P_{e,j}$ ever acts at a stage $t \geq s$. Then
every element put into $A_{1-j}$ between stages $s$ and $t$ is removed
from $A_{1-j}$ at stage $t$, so $f_{1-j,t} \supseteq f_{1-j,s}$, and
hence $x \in W_{e_{1-j}}^{f_{1-j,t}}[t]$. By the definition of
$r(e,j)$ at this stage, $f_{1-j}$ cannot change below the use
of this enumeration after stage $t$, so $x \in
W_{e_{1-j}}^{f_{1-j}}$. Thus property (3) is satisfied.
\end{proof}

The sets $X_j$ built in the proof of Theorem~\ref{main} have density
$1$, which means that they are coarsely computable, and is also
interesting in light of work of Igusa~\cite{Igusa2}: Astor,
Hirschfeldt, and Jockusch~\cite{AHJ} showed that the upper cone above
any nontrivial (nonuniform or uniform) generic degree has measure $0$,
so a minimal generic degree would necessarily be half of a minimal
pair. It is open whether there are minimal generic degrees, but
Igusa~\cite{Igusa2} showed that the generic degree of a density-$1$
set cannot be minimal. Interestingly, he also showed that if a uniform
generic degree does not bound a nontrivial uniform generic degree
containing a density-$1$ set, then it is half of a minimal pair, but
again it is not known whether such degrees exist.

It is also worth noting that the $X_j$ are $\Delta^0_2$. In fact,
they are both co-d.c.e. (Notice that a c.e.\ density-$1$ set is
generically computable.) The case of Theorem~\ref{igthm} where both
sets are $\Delta^0_2$ had been proved earlier by Downey, Jockusch, and
Schupp~\cite{DJS}. By Theorem~\ref{randthm}, the coarse degree of
every set that is not coarsely computable is half of a minimal pair,
and indeed forms minimal pairs with the coarse degrees of measure-$1$
many sets, but the following questions are open.

\begin{oq}
Are there $\Delta^0_2$ sets whose (nonuniform or uniform) coarse
degrees form a minimal pair?
\end{oq}

The same question can be asked for the notion of dense computability
introduced by Astor, Hirschfeldt, and Jockusch~\cite{AHJ}. (In that
paper, they showed that the resulting degree structures do have
minimal pairs.)

\begin{oq}
Is the (nonuniform or uniform) generic degree of every set that is not
generically computable half of a minimal pair? What is the measure of
the set of all $X \oplus Y$ such that the generic degrees of $X$ and
$Y$ form a minimal pair? More generally, what can be said about the
distribution of minimal pairs in the generic degrees?
\end{oq}

Astor, Hirschfeldt, and Jockusch~\cite{AHJ} also studied the following
notion, which had been briefly considered much earlier by
Meyer~\cite{Meyer} and Lynch~\cite{Lynch}.

\begin{defn}
An \emph{effective dense description} of a set $A$ is a (total)
function $f : \omega \to \{0,1,\square\}$ such that $f^{-1}(\square)$
has density $0$ and $f(n)=A(n)$ whenever $f(n) \in \{0,1\}$. A
set is \emph{effectively densely computable} if it has a
computable effective dense description.

We say that $A$ is \emph{nonuniformly effectively densely reducible}
to $B$ if every effective dense description of $A$ computes an
effective dense description of $B$.
We say that $A$ is \emph{uniformly effectively densely reducible} to
$B$ if there is a Turing functional $\Phi$ such that if $f$ is an
effective dense description of $B$, then $\Phi^f$ is an effective
dense description of $A$.
\end{defn}

These reducibilities lead to degree structures, for which the
following question remains open. (It is also open whether there are
minimal pairs for relative effective dense reducibility.)

\begin{oq}[Astor, Hirschfeldt, and Jockusch~\cite{AHJ}]
Are there minimal pairs in the (nonuniform or uniform) effective dense
degrees?
\end{oq}

It does not seem that the proof of Theorem~\ref{main} can be adapted
to this case in a straightforward way, because the key observation
mentioned in the informal description of the construction in that
proof does not apply here: Suppose that we make changes to the $j$-side
of a pair of convergent computations, then find another such pair, and
then again make changes to the $j$-side. If we later want to make
changes to the $(1-j)$-side, we can no longer restore both
computations on the $j$-side, because they might be based on different
oracles. Because we are dealing with Turing functionals rather than
enumeration operators, the only way to guarantee a return to a
previous computation is to return to the exact original oracle (up to
the relevant use). A proof along the lines of the usual construction
of a minimal pair of c.e.\ Turing degrees, where we try to preserve at
least one side of a pair of convergent computations up to their length
of agreement (see e.g.\ \cite[Section 2.14.2]{DowHir}), also seems
problematic, because if a computation converges on both sides to $0$,
say, and then the computation on one side disappears, when that
computation converges again, it might converge to $\square$, which now
allows the computation on the other side to change to $1$ without
creating a disagreement.

\end{document}